\numberwithin{equation}{section}
\theoremstyle{plain}
\newtheorem{theorem}{Theorem}[section]
\newtheorem{proposition}[theorem]{Proposition}
\theoremstyle{definition}
\newtheorem{definition}[theorem]{Definition}
\newtheorem{remark}{Remark}
\newcommand{\vertiii}[1]{{\left\vert\kern-0.25ex\left\vert\kern-0.25ex\left\vert #1 
    \right\vert\kern-0.25ex\right\vert\kern-0.25ex\right\vert}}
\title[Uniqueness for the singular Schrödinger equation] 
      {Uniqueness for the Schrödinger equation with an inverse square potential and application to controllability and inverse problems}
\author{Salah-Eddine Chorfi}
\address{S. E. Chorfi, Faculty of Sciences Semlalia, Cadi Ayyad University, LMDP, IRD, UMMISCO, B.P. 2390, Marrakesh, Morocco}
\email{s.chorfi@uca.ac.ma}
\subjclass[2020]{Primary: 93B05, 35R30; Secondary: 35J10, 35A21, 35J75}
\keywords{Schrödinger equation, singular potential, inverse square potential, uniqueness, approximate controllability}
\thanks{This work started when the author visited the University of Tokyo as an invited professor. The author thanks Masahiro Yamamoto for the hospitality, support, and fruitful discussions.}
\begin{document}
\begin{abstract}
In this paper, we prove a sharp uniqueness result for the singular Schrödinger equation with an inverse square potential. This will be done without assuming geometrical restrictions on the observation region. The proof relies on a recent technique transforming the Schrödinger equation into an elliptic equation. We show that this technique is still applicable for singular equations. In our case, substantial difficulties arise when dealing with singular potentials of cylindrical type. Using the uniqueness result, we show the approximate controllability of the equation using a distributed control. The uniqueness result is also applied to prove the uniqueness for an inverse source problem.
\end{abstract}
\dedicatory{\large Dedicated to the memory of Professor Hammadi Bouslous}
\maketitle

\section{Introduction and main results}
The Schrödinger equation is a prototype model in quantum mechanics that describes the evolution of a quantum mechanical system over time. Singular potentials usually represent some potential energy functions that become unbounded at a single or several positions. A common example that appears in many applications is given by the inverse square potential. This is the case, for instance, when the quantum mechanical system under consideration is governed by a potential energy field that decreases as the inverse square of the distance from the origin point. Mathematically, this type of singular potentials at the origin is given by $V(x)=\dfrac{\lambda}{r^2}$, where $\lambda$ is a constant and $r=|x|$ is the distance from the origin.

To present the problem, let $n \ge 1$ be an integer and let $T>0$ be a positive terminal time. We assume that $\Omega \subset \mathbb R^n$ is a bounded domain containing the origin, i.e., $0 \in \Omega$, and its boundary $\Gamma:=\partial \Omega$ is of class $C^2$. Let $\nu(x)$ be the unit normal vector at $x\in \Gamma$ pointing towards the exterior of $\Omega$. We consider the following singular Schrödinger equation with the so-called inverse square potential
\begin{equation}\label{(1.2)}
    \mathrm{i}\partial_t u -\Delta u-\frac{\lambda}{|x|^2} u =0, \qquad (t, x) \in Q,
\end{equation}
where $\mathrm{i}:= \sqrt{-1}$, $Q:=(0, T) \times \Omega$, $\Sigma:=(0, T) \times \partial \Omega$, and $\lambda < \lambda_\star$ (subcritical case), with $\lambda_\star(n):=\dfrac{(n-2)^2}{4}$ is the critical constant in the Hardy inequality when $n \neq 2$: for every $y \in H_0^1(\Omega)$, we have $\dfrac{y}{|x|} \in L^2(\Omega)$ and
\begin{equation}\label{hp}
    \forall y \in H_0^1(\Omega), \quad \lambda_{\star} \int_{\Omega} \frac{y^2}{|x|^2} \, d x \leq \int_{\Omega}|\nabla y|^2\, d x .
\end{equation}

Control and inverse problems for Schrödinger equations with bounded coefficients have been extensively studied in the literature. Let us mention some relevant few works: Lebeau \cite{Le'92} proved that the so-called Geometric Control Condition (GCC) is sufficient for the exact controllability of the standard Schrödinger equation at any time $T$. Machtyngier \cite{Ma'94} proved an exact controllability result for the Schrödinger equation by the multiplier method. We refer to \cite{Zu'03} for a short review on this topic. Martin et al. \cite{MRR} have used flatness to construct a smooth control for the one-dimensional Schrödinger equation. More recently, Mercado and Morales \cite{MM'23} have proven the exact controllability of a Schrödinger equation with dynamic boundary conditions. As for inverse problems by Carleman estimates, Baudouin and Puel \cite{BP'08, BP'02} have proven a stability estimate for bounded potentials. Baudouin and Mercado \cite{BM'08} have considered a discontinuous main coefficient. Moreover, in \cite{MOR'08}, Mercado et al. have obtained some stability results under less restrictive measurements. Huang et al. \cite{HKYM'19} have established Lipschitz stability of the electric potential and the direction of the magnetic field. More recently, Imanuvilov and Yamamoto \cite{IY'022} have proven a Lipschitz stability estimate for source terms or zeroth order coefficients in Schrödinger equation from data taken at the terminal time $T$.

In the last decades, an increasing interest has been devoted to the study of Schrödinger equations with unbounded coefficients and singular potentials. We mention the paper by Yuan and Yamamoto \cite{YM} on the uniqueness for less regular potentials. Baudouin et al. \cite{BKP'05} have studied the regularity of a Schrödinger equation involving a time-dependent Coulomb potential with application to a bilinear optimal control problem. Special attention was paid to inverse square potentials, also known as Hardy-type potentials. However, only a few works have been devoted to this critical type of potentials, for instance, Vancostenoble and Zuazua \cite{VZ'09} have proven the exact boundary controllability for an interior singularity. In \cite{Ca'12}, Cazacu considered the same controllability problem for a boundary singularity. It should be emphasized that almost all previous works assume some geometric restrictions on the observation region. For instance, the authors in \cite{VZ'09} considered $\Omega$ to be a bounded domain with boundary $\Gamma$ of class $C^3$ such that $0\in \Omega$. Moreover, the observation region $\omega \subset \Omega$ is chosen sufficiently large so that $\omega$ is a neighborhood of $\overline{\Gamma}_0$, where
$$\Gamma_0:=\left\{x\in \Gamma \colon x\cdot \nu(x) \ge 0\right\},$$
that is $\omega =\Omega \cap U$, where $U\subset \mathbb R^n$ is an open set such that $\overline{\Gamma}_0 \subset U$. Note that if $\Omega$ is convex, we have $\Gamma_0 =\Gamma$, and then one should observe through the whole boundary. We also refer to \cite{XWD23} for recent results on the half-line. As for the parabolic operator, we refer to the recent work \cite{SV23} for some results employing a local Carleman estimate near the singularity. Furthermore, for controllability of degenerate and singular parabolic equations, we refer to the book by Fragnelli and Mugnai \cite{FM'21}. Fragnelli et al. \cite{FMS'24} have recently investigated controllability and stabilization of a degenerate/singular Schrödinger equation using multiplier and compactness-uniqueness arguments.

The key idea that we adopt in this paper relies on transforming the singular Schrödinger equation into a singular elliptic equation. Regarding elliptic equations with unbounded coefficients, unique continuation results have been established in the literature. For instance, Jerison and Kenig \cite{JK'85} proved (strong) unique continuation for the differential inequality
\begin{equation}\label{diffi}
    |\Delta w(x)| \le |V(x)| |w(x)|, \qquad x\in \Omega,
\end{equation}
for potentials $V\in L^{\frac{n}{2}}(\Omega)$ which is optimal for $L^p$ spaces. We also refer to the recent work by Choulli \cite{Ch23} for a quantitative version. However, unique continuation holds for the inverse square potential $V(x)=\dfrac{\lambda}{|x|^2}$, see Pan \cite{Pa'92}. Note that $\dfrac{\lambda}{|x|^2} \notin L^{\frac{n}{2}}(\Omega)$, which makes the inverse square potential regarded as critical and rather challenging compared to other types of singularity. The strategy we adopt here has been recently developed in \cite{IY'22} for a Schrödinger equation with bounded first and zeroth-order coefficients, that is, for potentials $V(x)\in L^\infty(\Omega)$. In our case, after transformation, we obtain a singular elliptic equation at the expense of augmenting the variable by one dimension. This gives rise to cylindrical singular potentials for which the aforementioned results do not apply.

The first main result consists of sharp uniqueness for the singular Schrödinger equation.
\begin{theorem}\label{thm1}
Let $\omega \subset \Omega$ be an arbitrary nonempty open set. We assume that $u \in C([0,T];L^2(\Omega))$ satisfies \eqref{(1.2)}.
Then $u = 0$ in $(0,T) \times \omega$ yields $u=0$ in $Q$.
\end{theorem}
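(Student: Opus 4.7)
The approach advertised in the introduction is to convert the singular Schrödinger equation into a singular \emph{elliptic} equation in one additional variable, and then run a unique continuation argument for that elliptic equation. I would carry this out in three steps, the second of which I expect to be the bottleneck.

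\textbf{Step 1 (transformation to an elliptic problem).} Following the scheme of \cite{IY'22}, I would introduce an integral transform in time
\begin{equation*}
   U(s,x) \;=\; \int_0^T K(s,t)\,u(t,x)\,dt, \qquad (s,x)\in I\times\Omega,
\end{equation*}
where $I$ is an auxiliary interval and $K(s,t)$ is a kernel of Gaussian/$\cosh$ type selected so that, after integration by parts in $t$, the factor $\mathrm{i}\partial_t u$ in \eqref{(1.2)} produces a second-order derivative in $s$ of the right sign. This yields on $I\times\Omega$ an equation of the form
\begin{equation*}
   -\partial_s^2 U -\Delta_x U - \frac{\lambda}{|x|^2}\,U \;=\; R(s,x),
\end{equation*}
where $R$ gathers the boundary contributions at $t=0$ and $t=T$; by an appropriate choice of $K$ (vanishing at these endpoints, or a modification of $I$) one can either kill $R$ or treat it as a smooth source. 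The two useful properties are the locality $U\equiv 0$ on $I\times\omega$ whenever $u\equiv 0$ on $(0,T)\times\omega$, and injectivity of the transform $u\mapsto U$ on the relevant class of $L^2$-solutions, which is standard for Gaussian-type kernels.

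\textbf{Step 2 (unique continuation for a cylindrical singular elliptic operator).} One must then propagate $U\equiv 0$ from $I\times\omega$ to all of $I\times\Omega$ for the operator
\begin{equation*}
   \mathcal L_0 \;=\; -\partial_s^2-\Delta_x - \frac{\lambda}{|x|^2}\qquad\text{on }\;I\times\Omega\subset\mathbb R^{n+1}.
\end{equation*}
This is the main obstacle. The singular set of $\mathcal L_0$ is the whole axis $I\times\{0\}$, a cylindrical (one-dimensional) singularity, and $\lambda/|x|^2\notin L^{(n+1)/2}_{\mathrm{loc}}$ near that axis, so neither the Jerison–Kenig result \cite{JK'85} nor Pan's strong unique continuation \cite{Pa'92} applies off the shelf. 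My plan is to derive a Carleman estimate for $\mathcal L_0$ with a weight $\varphi(s,x)$ that is strictly (pseudo)convex in the directions of propagation and whose level sets separate $\omega$ from the rest of $\Omega$, while avoiding any tangential collision with the axis. The dangerous cross-term produced in the Carleman calculus is not of the usual $\|U\|^2$-type but rather a weighted version of $\int|U|^2/|x|^2$; this term is absorbed by combining the positive commutator with the Hardy inequality \eqref{hp}, which is coercive precisely because $\lambda<\lambda_\star$. A standard cutoff-and-iteration scheme (à la Holmgren) then turns the local Carleman inequality into $U\equiv 0$ throughout $I\times\Omega$.

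\textbf{Step 3 (conclusion).} Invoking the injectivity of the transform from Step 1 converts $U\equiv 0$ on $I\times\Omega$ into $u\equiv 0$ on $Q$, which is the desired conclusion. The delicate interaction between the Carleman weight and the cylindrical Hardy term in Step 2 is the analytic heart of the proof; everything else is structural.
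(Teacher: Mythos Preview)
Your three-step outline matches the paper's architecture, but Steps~1 and~2 both diverge from what the paper actually does.

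In Step~1 the paper does not use a fixed Gaussian-type kernel together with an injectivity property. Instead it takes $\mathcal K(t,\tau)$ to be a smooth solution of the one-dimensional Schr\"odinger controllability problem $\mathrm{i}\partial_\tau\mathcal K-\partial_t^2\mathcal K=0$ on $(-1,1)\times(0,T)$ with $\mathcal K(\cdot,0)=\mathcal K(\cdot,T)=0$ and with a \emph{freely prescribed} boundary trace $\mathcal K(-1,\tau)=\psi(\tau)\in C_0^\infty(0,T)$; the existence of such $\mathcal K$ is precisely a null-controllability result, cf.~\cite{MRR}. The vanishing at $\tau=0,T$ kills the boundary contributions exactly, so your remainder $R$ is identically zero. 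Then, rather than inverting a single transform, one evaluates $w(-1,x)=\int_0^T\psi(\tau)u(\tau,x)\,d\tau=0$ and lets $\psi$ range over $C_0^\infty(0,T)$. This density argument replaces your injectivity claim and sidesteps the question of whether a single complex-Gaussian transform on a finite interval is actually injective on $C([0,T];L^2(\Omega))$.

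In Step~2 the paper takes a much softer route than your Carleman-plus-Hardy program. The singular set of $\mathcal L_0$ is the axis $(-1,1)\times\{0\}$, so one simply excises a tube $(-1,1)\times B_r$ around it: on the complement the potential $\lambda/|x|^2$ is bounded, and classical elliptic unique continuation (e.g.\ \cite{LLR22}) propagates $w=0$ from $(-1,1)\times\omega$ to $(-1,1)\times(\Omega\setminus\overline{B_r})$; letting $r\to0^+$ finishes. The case $0\in\omega$ is even easier. No Carleman estimate tailored to the cylindrical Hardy term is needed, and no geometric constraint on $\omega$ appears. Your Carleman route may well be workable, but it is exactly the kind of hard analysis the paper is set up to avoid, and your sketch does not explain how the weight can be chosen near the axis without reintroducing geometric restrictions on $\omega$---which is the whole point of the theorem.
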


The structure of the paper will be as follows: in Section \ref{sec2}, we discuss some unique continuation results for an elliptic equation with a cylindrical singular potential. Using some recent results, we prove a unique continuation principle. Section \ref{sec3} is devoted to the proof of Theorem \ref{thm1}. The result will then be applied to establish the approximate controllability of the singular Schrödinger equation and the uniqueness in an inverse source problem.

\section{Uniqueness for an elliptic equation with a singular potential}\label{sec2}
In this section, we discuss some results (which will be used later) for the unique continuation of an elliptic equation with a singular cylindrical potential. We refer to \cite{FFT'12} for a much more general setting.

\subsection*{Notations}
Let us first recall some notations that will be used throughout the paper. We set $D=(D_0,D'), D'=(D_1,\dots, D_n), D_0=-\mathrm{i}\partial_t, D_j=-\mathrm{i}\partial_{x_j}$ for $j\in \{1,\dots, n\}$. We will use the following functions space:
$$H^{1,2}(Q):=H^1\left(0, T ; L^2(\Omega)\right) \cap L^2\left(0, T; H^2(\Omega)\right).$$
If $P$ is a differential operator, we consider its formal adjoint $P^*$:
$$\left \langle P y, z \right \rangle_{L^2(E)}=\left \langle y, P^* z\right \rangle_{L^2(E)}, \qquad y, z \in C_0^{\infty}(E),$$
where $E \subset Q$ is a domain with a smooth boundary.

For $(t,x)\in (-1,1)\times \Omega$, we denote $\mathbf{x}:=(t,x) \in \mathbb{R}^N$, where $N=n+1$. We denote $0:=0_{\mathbb{R}^n}$, $\mathbf{0}:=0_{\mathbb{R}^N}$, and we set $2^*=\frac{2N}{N-2}>2$ for the critical Sobolev exponent. For any $r>0$, we denote by $B_r$ the open ball in $\mathbb{R}^N$ of center $\mathbf{0}$ and radius $r$, that is to say $B_r:=\left\{\mathbf{x} \in \mathbb{R}^N: |\mathbf{x}|<r\right\}$. More generally, we set $B(\mathbf{x}, r)=\left\{\mathbf{y} \in \mathbb{R}^N:|\mathbf{y}-\mathbf{x}|<r\right\}$. The closed ball will be denoted by $\overline{B}_r=\left\{\mathbf{x} \in \mathbb{R}^N:|\mathbf{x}| \leq r\right\}$, and the $(N-1)$-dimensional unit sphere by $\mathbb{S}^{N-1}:=\{\theta \in \mathbb{R}^N: |\theta|=1 \}$. The notation $e_1$ stands for the first vector in the canonical basis of $\mathbb{R}^N$. Finally, $|E|$ will denote the Lebesgue measure of a set $E \subset (-1,1) \times \Omega$.

We shall consider singular potentials of cylindrical form
$$V(\mathbf{x})=\frac{\lambda}{|x|^2}, \qquad \mathbf{x}=(t,x),$$
which can be written as
$$
V(\mathbf{x})=\frac{a\left(\frac{\mathbf{x}}{|\mathbf{x}|}\right)}{|\mathbf{x}|^2},
$$
where 
\begin{equation}
a(\theta):=\dfrac{\lambda}{|\theta_n|^2} \qquad \text{for all }\theta=\frac{\mathbf{x}}{|\mathbf{x}|} \in \mathbb{S}^{N-1} \setminus \{e_1, -e_1\}, 
\end{equation}
with $\theta_n:=\dfrac{x}{|\mathbf{x}|}$. The discrete spectrum of the operator $-\Delta_{\mathbb{S}^{N-1}}-a$ consists of a nondecreasing sequence of eigenvalues
$$
\mu_1(a) \leq \mu_2(a) \leq \cdots \leq \mu_k(a) \leq \cdots,
$$
diverging to $+\infty$.

We deal with the following elliptic equation with cylindrical singular potential
\begin{equation}\label{eqels}
    -\Delta_\mathbf{x} w(\mathbf{x}) -\frac{\lambda}{|x|^2} w(\mathbf{x})=0, \qquad \mathbf{x}:=(t,x) \in (-1,1)\times \Omega.
\end{equation}
We need the following key proposition, which is a special form of \cite[Theorem 1.1]{FFT'12}.
\begin{proposition}\label{lmasy}
Let $w \neq 0$ be an $H^1((-1,1)\times \Omega)$ solution to \eqref{eqels}. Then, there exists $k_0 \in \mathbb{N}$ such that if $m \geq 1$ is the multiplicity of the eigenvalue $\mu_{k_0}(a)$ and $\left\{\psi_i: j_0 \leq i \leq j_0+m-1\right\}$, $\left(j_0 \leq k_0 \leq j_0+m-1\right)$, is an $L^2\left(\mathbb{S}^{N-1}\right)$-orthonormal basis for the eigenspace associated to $\mu_{k_0}(a)$, then
\begin{equation}
    r^{-\gamma} w(r \mathbf{x}) \rightarrow|\mathbf{x}|^\gamma \psi\left(\frac{\mathbf{x}}{|\mathbf{x}|}\right) \quad \text { in } H^1\left(B_1\right) \quad \text { as } r \rightarrow 0^{+},
\end{equation}
where
$$
\gamma=-\frac{N-2}{2}+\sqrt{\left(\frac{N-2}{2}\right)^2+\mu_{k_0}(a)}, \quad \beta_i=R^{-\gamma} \int_{\mathbb{S}^{N-1}} w(R \theta) \psi_i(\theta) d S
$$
for all $R>0$ such that $\overline{B}_R=\left\{\mathbf{x} \in \mathbb{R}^N: |\mathbf{x}| \leq R\right\} \subset (-1,1)\times \Omega$, and
\begin{equation}\label{betai}
\psi(\theta):=\sum_{i=j_0}^{j_0+m-1} \beta_i \psi_i(\theta),\; \theta\in \mathbb{S}^{N-1}, \; \left(\beta_{j_0}, \beta_{j_0+1}, \ldots, \beta_{j_0+m-1}\right) \neq(0,0, \ldots, 0).
\end{equation}
\end{proposition}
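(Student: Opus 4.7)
The statement is presented as a special case of \cite[Theorem 1.1]{FFT'12}, so the plan is to verify that the cylindrical potential $V(\mathbf{x})=\lambda/|x|^2$ satisfies the hypotheses of that theorem and to specialize its conclusion.

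First I would check the structural assumptions on the potential. By construction $V$ is homogeneous of degree $-2$ in $\mathbf{x}=(t,x)$ and admits the representation $V(\mathbf{x})=a(\mathbf{x}/|\mathbf{x}|)/|\mathbf{x}|^2$ with $a(\theta)=\lambda/|\theta_n|^2$ on $\mathbb{S}^{N-1}\setminus\{\pm e_1\}$. Although $a$ is itself singular at the two antipodal points $\pm e_1$ along the time-axis, the subcriticality $\lambda<\lambda_\star(n)$ together with a spherical Hardy inequality ensures that the quadratic form $\int_{\mathbb{S}^{N-1}}(|\nabla_\theta\psi|^2-a(\theta)\psi^2)\,dS$ is coercive on $H^1(\mathbb{S}^{N-1})$, so the angular operator $-\Delta_{\mathbb{S}^{N-1}}-a$ is self-adjoint with compact resolvent and yields the discrete spectrum $\{\mu_k(a)\}$.

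Next, for $r>0$ small enough that $\overline{B}_r\subset(-1,1)\times\Omega$, I would introduce the Almgren-type quantities
$$
H(r)=\frac{1}{r^{N-1}}\int_{\partial B_r}w^2\,dS,\quad D(r)=\frac{1}{r^{N-2}}\int_{B_r}\bigl(|\nabla w|^2-V w^2\bigr)\,d\mathbf{x},\quad \mathcal N(r)=\frac{D(r)}{H(r)},
$$
and derive, via Rellich--Pohozaev identities adapted to the singular potential, the relation $H'(r)/H(r)=2\mathcal N(r)/r$ and the fact that $\mathcal N$ is monotone non-decreasing and bounded. The limit $\gamma:=\lim_{r\to 0^+}\mathcal N(r)$ then exists and is finite, and comparison with the angular spectral problem forces $\gamma(\gamma+N-2)=\mu_{k_0}(a)$ for a uniquely determined index $k_0$, producing the stated value of $\gamma$.

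The third step is a blow-up analysis. The rescaled functions $w_r(\mathbf{x}):=w(r\mathbf{x})/\sqrt{H(r)}$ solve the same equation on $B_{1/r}$, are normalized in $L^2(\partial B_1)$, and are uniformly bounded in $H^1(B_1)$ thanks to the bound on $\mathcal N$. Along a subsequence they converge, weakly in $H^1$ and strongly away from the singular axis by elliptic regularity, to a nontrivial solution $W$ on $\mathbb R^N$ whose Almgren frequency is identically equal to $\gamma$. This constancy forces $W$ to be homogeneous of degree $\gamma$, so separation of variables yields $W(\mathbf{x})=|\mathbf{x}|^\gamma\psi(\mathbf{x}/|\mathbf{x}|)$ with $\psi$ a nonzero element of the $\mu_{k_0}(a)$-eigenspace. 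Projecting on the $L^2$-orthonormal basis $\{\psi_i\}$ identifies the coefficients $\beta_i$ through the formula in \eqref{betai}, and a uniqueness argument for the limit promotes subsequential convergence to the full-family $H^1(B_1)$ convergence stated in the proposition.

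The main obstacle is that $V$ is singular not only at the origin $\mathbf{0}$ but along the whole time-segment $\{(t,0):|t|<1\}$ inside the cylinder. Consequently, the Pohozaev identity underlying the monotonicity of $\mathcal N$ cannot be obtained by a direct integration by parts on $B_r$: one must first excise a tubular neighborhood of the singular axis, perform the calculation in the complement, and let the tube radius tend to zero while controlling the boundary terms uniformly in $r$. The subcritical condition $\lambda<\lambda_\star(n)$ is precisely what guarantees that these remainders vanish, and this delicate cutoff argument is the technical heart that extends the classical Almgren scheme to the cylindrical Hardy-type potential considered here.
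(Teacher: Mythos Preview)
The paper gives no proof of its own for this proposition; it is simply stated as a special form of \cite[Theorem~1.1]{FFT'12}. Your outline---checking that the cylindrical potential $V(\mathbf{x})=\lambda/|x|^2$ fits the structural hypotheses of that reference and then running the Almgren frequency/blow-up scheme---is exactly the strategy underlying \cite{FFT'12}, so your approach is consistent with (and more detailed than) what the paper does.
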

Next, we state a unique continuation result.
\begin{proposition}\label{propuc}
Let $\omega \subset \Omega$ be a nonempty open set. Let $w$ be an $H^1((-1,1)\times \Omega)$ solution to \eqref{eqels}. If $w = 0$ in $(-1,1)\times \omega$, then $w = 0$ in $(-1,1)\times \Omega$.
\end{proposition}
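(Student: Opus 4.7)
The plan is to combine standard elliptic unique continuation away from the singular line $(-1,1)\times\{0\}$ with the asymptotic expansion of Proposition \ref{lmasy} near it. On the open set $\mathcal{D}:=(-1,1)\times(\Omega\setminus\{0\})$, the coefficient $\lambda/|x|^2$ is locally bounded, so $w$ satisfies a differential inequality of the form $|\Delta_\mathbf{x} w|\le V_{\mathrm{loc}}|w|$ with $V_{\mathrm{loc}}\in L^{\infty}_{\mathrm{loc}}(\mathcal{D})$. Classical unique continuation (in the spirit of Aronszajn) then propagates the vanishing of $w$ from the nonempty open subset $(-1,1)\times(\omega\setminus\{0\})$ through the whole connected component of $\mathcal{D}$ containing it.

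When $n\ge 2$, removing a single point does not disconnect $\Omega$, so $\mathcal{D}$ is connected and the preceding step already gives $w\equiv 0$ on $\mathcal{D}$; since the singular line is Lebesgue-null and $w\in H^1$, this yields $w=0$ on $(-1,1)\times\Omega$. When $n=1$, $\mathcal{D}$ splits into two components and one must cross the singular line. I would argue by contradiction: if $w\not\equiv 0$, Proposition \ref{lmasy} provides $\gamma$ and a nontrivial angular profile $\psi$ with $r^{-\gamma}w(r\mathbf{x})\to|\mathbf{x}|^{\gamma}\psi(\mathbf{x}/|\mathbf{x}|)$ in $H^1(B_1)$. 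The first step forces $w$ to vanish on at least one of the two components of $\mathcal{D}$, and taking the $H^1$ limit along sequences lying in that component kills $\psi$ on a corresponding open subset of $\mathbb{S}^{N-1}\setminus\{\pm e_1\}$. Since $\psi$ is a finite combination of eigenfunctions of $-\Delta_{\mathbb{S}^{N-1}}-a$ whose coefficient $a$ is real-analytic on $\mathbb{S}^{N-1}\setminus\{\pm e_1\}$, $\psi$ is itself real-analytic there, and analytic continuation inside the relevant connected component of the sphere kills $\psi$ on that component, contradicting \eqref{betai}.

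I expect the main difficulty to lie precisely in the case $N=2$, since $\mathbb{S}^{1}\setminus\{\pm e_1\}$ consists of two disjoint arcs and analytic continuation alone can only rule out $\psi$ on one of them. Closing this gap will likely require an additional ingredient such as translation invariance in $t$ (applying Proposition \ref{lmasy} at other base points $(t_0,0)$ on the singular line and tracking how the angular profile varies) or a reflection-type argument exploiting the evenness of $\lambda/x^2$ in $x$ together with the $H^1$ matching of $w$ across the singular line.
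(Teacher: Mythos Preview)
For $n\ge 2$ your argument is correct and is precisely the paper's: the proof (given in the Remark immediately following the proposition) excises a small ball around $0\in\Omega$, applies classical unique continuation for elliptic operators with bounded potentials on $(-1,1)\times(\Omega\setminus\overline{B}_r)$, and lets $r\to 0^+$. This is equivalent to your removal of the singular line $(-1,1)\times\{0\}$ followed by Aronszajn-type continuation on the connected set $\mathcal{D}$.

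Your discussion of $n=1$ actually goes beyond the paper. The paper's Remark tacitly assumes $(-1,1)\times(\Omega\setminus\overline{B}_r)$ is connected and does not address the one-dimensional disconnection at all; your idea of invoking Proposition~\ref{lmasy} to cross the singular line is additional to what the paper provides. As you honestly flag, that route is incomplete when $N=2$: since $\mathbb{S}^1\setminus\{\pm e_1\}$ consists of two disjoint arcs, having $\psi\equiv 0$ on one arc does not by itself contradict \eqref{betai}, and neither analytic continuation nor the parity/translation heuristics you sketch obviously force $\psi$ to vanish on the other arc (the singular Sturm--Liouville problem $-\psi''-\lambda\sin^{-2}\!\theta\,\psi=\mu\psi$ may well admit eigenfunctions concentrated on a single arc, depending on the boundary behavior imposed at $\pm e_1$). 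So the $n=1$ case remains a genuine gap --- one that is shared by the paper's own sketch.
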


\begin{remark}
The proof of Proposition \ref{propuc} can be deduced from classical results. If $0\in \omega$, it suffices to localize the singularity $0$ in a small open ball $B_R$ of radius $R>0$ so that $\overline{B}_R \subset \omega$. Then apply the unique continuation for elliptic operators with bounded potentials in $(-1,1)\times (\Omega\setminus\overline{B}_R)$; see, e.g. \cite[Theorem 5.2]{LLR22} (here $w = 0$ in $(-1,1)\times \overline{B}_R$ by assumption). If $0\notin \omega$, we choose a small open ball $B_R$ such that $\omega \cap B_R =\varnothing$. Then, a similar argument yields that $w=0$ in $(-1,1)\times (\Omega\setminus\overline{B}_r)$ for all $0<r<R$. Thus one can conclude by passing to the limit as $r \to 0^+$. We refer to \cite{ZLZ18} for a singular heat equation, where the authors assumed $0\in \omega$ and the convexity of $\Omega$ for a quantitative unique continuation.
\end{remark}

Next, we focus on the non-singular case ($\lambda=0$). We will adapt an idea from \cite[Theorem 1.4]{FF'14} to prove a unique continuation result from sets of positive measure.
\begin{proposition}\label{propuc1}
Let $\Omega \subset \mathbb R^n$ be a bounded domain with a $C^2$ boundary, and let $\omega \subset \Omega$ be a set of positive measure. Consider $w_0$ an $H^1((-1,1)\times \Omega)$ solution to \eqref{eqels} with $\lambda=0$. If $w_0 = 0$ in $(-1,1)\times \omega$, then $w_0 = 0$ in $(-1,1)\times \Omega$.
\end{proposition}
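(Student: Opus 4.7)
The plan is to argue by contradiction via a blow-up at a Lebesgue density point, adapting the strategy of \cite{FF'14}. Suppose $w_0 \not\equiv 0$ in $(-1,1)\times \Omega$. Since $|\omega|>0$, Lebesgue's density theorem produces an $x_0 \in \omega$ of density $1$ in $\omega$; picking any $t_0 \in (-1,1)$, the point $\mathbf{x}_0=(t_0,x_0)$ lies in the interior of $(-1,1)\times \Omega$ and is a density-$1$ point of $(-1,1)\times \omega \subset \mathbb{R}^N$. Because $\lambda=0$, equation \eqref{eqels} reduces to Laplace's equation, which is translation-invariant, so the translate $\tilde{w}(\mathbf{x}):=w_0(\mathbf{x}_0+\mathbf{x})$ is harmonic on a neighborhood of $\mathbf{0}$.

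Suppose momentarily that $\tilde{w}$ does not vanish on any neighborhood of $\mathbf{0}$. Applying a local version of Proposition \ref{lmasy} to $\tilde{w}$ with $a\equiv 0$ (the eigenvalues of $-\Delta_{\mathbb{S}^{N-1}}$ are $\ell(\ell+N-2)$ for $\ell\in\mathbb{N}$, so $\gamma=\ell\in\mathbb{N}$), one obtains
$$r^{-\gamma}\tilde{w}(r\mathbf{x}) \longrightarrow \Psi(\mathbf{x}) := |\mathbf{x}|^\gamma \psi(\mathbf{x}/|\mathbf{x}|) \quad \text{in } H^1(B_1) \text{ as } r\to 0^+,$$
with $\Psi$ a homogeneous harmonic polynomial of degree $\gamma$ that is nonzero by \eqref{betai}. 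On the other hand, set $E_r:=r^{-1}\bigl(((-1,1)\times \omega-\mathbf{x}_0)\cap B_r\bigr)\subset B_1$. Since $\mathbf{x}_0$ is a density-$1$ point, $|E_r|\to |B_1|$ as $r\to 0^+$, and on $E_r$ the rescaled function is identically zero. The $H^1$ convergence gives $L^2$ convergence, hence a.e.\ convergence along a subsequence, so $\Psi=0$ a.e.\ in $B_1$. As $\Psi$ is a polynomial, $\Psi\equiv 0$: contradiction.

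Therefore $\tilde{w}$ must vanish on some neighborhood of $\mathbf{0}$, which means $w_0\equiv 0$ on a nonempty open subset of $(-1,1)\times \Omega$. The interior real-analyticity of harmonic functions on the connected open set $(-1,1)\times \Omega$ (or, equivalently, an application of Proposition \ref{propuc} once the vanishing is extended from an $N$-dimensional open neighborhood to a cylinder $(-1,1)\times \omega'$ using analyticity in $t$) yields $w_0 \equiv 0$ throughout $(-1,1)\times \Omega$, completing the proof.

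The main technical obstacle is the rigorous localization of Proposition \ref{lmasy} at a center $\mathbf{x}_0\neq \mathbf{0}$. As stated, the proposition concerns global $H^1$ solutions and the blow-up at $\mathbf{0}$, whereas $\tilde{w}$ is defined only on $((-1,1)\times \Omega) - \mathbf{x}_0$. One resolves this by restricting to a small ball $B(\mathbf{x}_0,\delta)\subset (-1,1)\times \Omega$ and invoking the corresponding local statement from \cite{FFT'12}, whose proof adapts without change; in the present $\lambda=0$ case this ultimately amounts to the classical existence of a nonzero leading homogeneous Taylor term at every interior point of a nontrivial harmonic function, which keeps the argument compatible with the blow-up framework used elsewhere in the paper.
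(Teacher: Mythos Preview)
Your approach is essentially the paper's: argue by contradiction, pick a Lebesgue density point $\mathbf{x}_0$ of $(-1,1)\times\omega$, translate (using that $\lambda=0$ makes the equation translation-invariant), and blow up via Proposition~\ref{lmasy} to reach a nonzero limit profile that must actually vanish. Two points are worth tightening. First, the sentence ``a.e.\ convergence along a subsequence, so $\Psi=0$ a.e.'' is not quite enough as written, because the zero sets $E_r$ vary with $r$; you need one extra line (e.g.\ convergence in measure together with $|B_1\setminus E_r|\to 0$, or the H\"older/Sobolev estimate the paper uses) to conclude $\|\Psi\|_{L^2(B_1)}=0$. Second, your case split and the final appeal to analyticity are a detour: the paper simply applies Proposition~\ref{lmasy} to the translate $v=w_0(\mathbf{x}_0+\cdot)$ on the full translated domain $((-1,1)\times\Omega)-\mathbf{x}_0$, where $v\not\equiv 0$ follows directly from $w_0\not\equiv 0$, so the nonvanishing of $(\beta_i)$ in \eqref{betai} already yields the contradiction without any propagation step.
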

For a general proof, one can refer to \cite[Proposition 3]{DG92}, where the authors used a strong unique continuation and the De Giorgi inequality. Nevertheless, we give a direct proof based on Proposition \ref{lmasy}.

\begin{proof}
Let $w_0$ be a solution to \eqref{eqels} in $(-1,1)\times \Omega$ (with $\lambda=0$) such that $w_0 = 0$ on $E :=(-1,1)\times \omega$. By contradiction we assume that $w_0 \neq 0$ in $(-1,1)\times\Omega$.

For a fixed $\mathbf{x}_0\in E$, the function $v(\mathbf{x}):=w_0(\mathbf{x}_0 +\mathbf{x})$ satisfies \eqref{eqels} (with $\lambda=0$) in $\left((-1,1)\times \Omega\right) - \mathbf{x}_0$. Applying Proposition \ref{lmasy} to $v$, we obtain
\begin{equation}\label{eqasy1}
    r^{-\gamma} v(r (\mathbf{x}-\mathbf{x}_0)) \rightarrow|\mathbf{x}-\mathbf{x}_0|^\gamma \psi\left(\frac{\mathbf{x}-\mathbf{x}_0}{|\mathbf{x}-\mathbf{x}_0|}\right) \quad \text {in } H^1\left(B(\mathbf{x}_0,1)\right) \quad \text { as } r \rightarrow 0^{+},
\end{equation}
where $\psi$ is defined by \eqref{betai}.

Since $|E|>0$, we may suppose that $\mathbf{x}_0$ is a density point of $E$ by Lebesgue's density theorem. Then
$$
\lim_{r \rightarrow 0^{+}} \frac{\left|\left(\mathbb{R}^N \backslash E\right) \cap B(\mathbf{x}_0,r)\right|}{|B(\mathbf{x}_0,r)|}=0.
$$
Then for all $\varepsilon>0$ there exists $r_0=r_0(\varepsilon) \in(0,1)$ such that, for all $r \in\left(0, r_0\right)$,
\begin{equation}\label{ieqm}
    \left|\left(\mathbb{R}^N \backslash E\right) \cap B(\mathbf{x}_0,r)\right| <\varepsilon |B(\mathbf{x}_0,r)|.
\end{equation}
Since $w_0 = 0$ in $E$, by the continuous embedding $H^1(B(\mathbf{x}_0,r)) \hookrightarrow L^{2^*}(B(\mathbf{x}_0,r))$ and Hölder's inequality, \eqref{ieqm} yields
$$
\begin{aligned}
\int_{B(\mathbf{x}_0,r)} w^2_0(\mathbf{x}) d \mathbf{x} & =\int_{\left(\mathbb{R}^N \backslash E\right) \cap B(\mathbf{x}_0,r)} w^2_0(\mathbf{x}) d \mathbf{x} \\
& \leq \left|\left(\mathbb{R}^N \backslash E\right) \cap B(\mathbf{x}_0,r)\right|^{\frac{2^*-2}{2^*}} \left(\int_{\left(\mathbb{R}^N \backslash E\right) \cap B(\mathbf{x}_0,r)}|w_0(\mathbf{x})|^{2^*} d \mathbf{x}\right)^{\frac{2}{2^*}} \\
& \le \varepsilon^{\frac{2^*-2}{2^*}}\left|B(\mathbf{x}_0,r)\right|^{\frac{2^*-2}{2^*}}\left(\int_{\left(\mathbb{R}^N \backslash E\right) \cap B(\mathbf{x}_0,r)}|w_0(\mathbf{x})|^{2^*} d \mathbf{x}\right)^{\frac{2}{2^*}}
\end{aligned}
$$
for all $r \in\left(0, r_0\right)$. By a change of variable, we infer that
$$\begin{aligned}
    &\int_{B(\mathbf{x}_0,1)} r^{-2\gamma}\left|v(r (\mathbf{x}-\mathbf{x}_0))\right|^2 d \mathbf{x}\\
& \le \left(\frac{\omega_{N-1}}{N}\right)^{\frac{2^*-2}{2^*}} \varepsilon^{\frac{2^*-2}{2^*}}\left(\int_{B(\mathbf{x}_0,1)} r^{-2^*\gamma} \left|v(r (\mathbf{x}-\mathbf{x}_0))\right|^{2^*} d \mathbf{x}\right)^{\frac{2}{2^*}}
\end{aligned}$$
for all $r \in\left(0, r_0\right)$. Taking the limit as $r \rightarrow 0^{+}$, we have by \eqref{eqasy1} that
$$
\begin{aligned}
& \int_{B(\mathbf{x}_0,1)}\left|\mathbf{x}-\mathbf{x}_0\right|^{2 \gamma} \psi^2\left(\frac{\mathbf{x}-\mathbf{x}_0}{\left|\mathbf{x}-\mathbf{x}_0\right|}\right) d \mathbf{x}  \\
& \leq\left(\frac{\omega_{N-1}}{N}\right)^{\frac{2^*-2}{2^*}} \varepsilon^{\frac{2^*-2}{2^*}}\left(\int_{B(\mathbf{x}_0,1)}\left|\mathbf{x}-\mathbf{x}_0\right|^{2^*\gamma} \psi^{2^*}\left(\frac{\mathbf{x}-\mathbf{x}_0}{\left|\mathbf{x}-\mathbf{x}_0\right|}\right) d \mathbf{x}\right)^{\frac{2}{2^*}}.
\end{aligned}
$$
Letting $\varepsilon \rightarrow 0^{+}$, we obtain $\psi = 0$ in $\mathbb{S}^{N-1}$. That is, $\displaystyle \sum_{i=j_0}^{j_0+m-1} \beta_i \psi_i = 0$ in $\mathbb{S}^{N-1}$. By the orthonormality of $\left\{\psi_i\right\}$, we get $\beta_i=0$ for all $i\in \{j_0, j_0 +1, \ldots, j_0+m-1\}$. This is in contradiction with \eqref{betai}.
\end{proof}

\begin{remark}
It should be emphasized that the above proof cannot be adapted to the case $\lambda\neq 0$ by a translation argument. Indeed, the proof relies on the fact that the translated function $w_0(\mathbf{x}_0 +\mathbf{x})$ solves the same equation \eqref{eqels} as $w$. This is not the case for $\lambda\neq 0$.
\end{remark}
\section{Sharp uniqueness for the singular Schrödinger equation}\label{sec3}
In this section, we first prove a sharp uniqueness result for the singular Schrödinger equation (Theorem \ref{thm1}). Then we apply the result to approximate controllability and uniqueness for an inverse source problem.

\subsection{Uniqueness for the singular Schrödinger equation}
We start by discussing some relevant properties of the Schrödinger operator with an inverse square potential.

We note that in the sequel, $L^2(\Omega)$ (idem for other spaces) will represent the Hilbert space $L^2(\Omega;\mathbb C)$ endowed with the Hermitian inner product given by
$$\langle y,z\rangle_{L^2(\Omega)}:=\int_\Omega y(x) \overline{z(x)}\, dx \qquad \forall y,z\in L^2(\Omega).$$
Following Baras and Goldstein \cite{BG'84}, it is well-known that global existence holds for such an evolution equation when $\lambda \le \lambda_\star$, and instantaneous blow-up occurs when $\lambda >\lambda_\star$. Here we consider the subcritical case $\lambda <\lambda_\star$.

Let us consider the operator defined weakly by
\begin{equation}
    \mathcal{L}_\lambda u :=-\Delta u -\dfrac{\lambda}{|x|^2} u, \qquad D(\mathcal{L}_\lambda):=H^1_0(\Omega).
\end{equation}
Since $\lambda <\lambda_\star$, the Hardy-Poincaré inequality implies that
$$\|y\|_\lambda:=\left[\int_{\Omega}\left(|\nabla y|^2- \frac{\lambda}{|x|^2}y^2\right) d x \right]^{\frac{1}{2}}$$
defines an equivalent norm to the standard norm of $H^1_0(\Omega)$. Thus the operator $\mathcal{L}_\lambda \colon H^1_0(\Omega) \to H^{-1}(\Omega)$ is an isomorphism. By the Rellich-Kondrachov theorem, we see that the part of $\mathcal{L}_\lambda$ on $L^2(\Omega)$ is a self-adjoint operator with a compact inverse. Thus, it generates an analytic $C_0$-semigroup of contractions in $L^2(\Omega)$.

\begin{remark}
The standard singular Schrödinger equation \eqref{(1.2)} is usually stated as
$$\mathrm{i}\partial_t u +\Delta u +\frac{\lambda}{|x|^2} u =0, \qquad (t, x) \in Q,$$
with a positive sign (see, e.g., \cite{VZ'09}), but this is an equivalent equation by complex conjugation.
\end{remark}

\begin{proof}[Proof of Theorem \ref{thm1}]
Let $u$ be a function such that
\begin{equation}\label{Simas}
\begin{cases}
P(t,x,D)u :=\mathrm{i}\partial_t u -\Delta u -\dfrac{\lambda}{|x|^2} u =0, \qquad (t, x) \in Q,\\
u\vert_{(0,T)\times \omega}=0.
\end{cases}
\end{equation}
Let $\mathcal K(t,\tau)\in C^\infty([-1,1]\times[0,T])$ be a solution to the boundary controllability problem for the Schrödinger equation
\begin{equation}
\begin{cases}\mathrm{i}\partial_{\tau}\mathcal K-\partial^2_{t}\mathcal K =0,\qquad   t\in (-1,1),\,\,\tau\in (0,T),\\
\mathcal K(-1,\tau)=\psi (\tau), \quad K(1,\tau)=v(\tau) \;\;\mbox{on}\,\, (0,T),\\
\mathcal K(\cdot,0)=\mathcal K(\cdot, T)=0,
\end{cases} \label{eqcon}
\end{equation}
where $\psi\in C^\infty_0(0,T)$ is a fixed complex valued function, and $v$ is a boundary control. We refer to \cite{MRR} for the existence of a smooth solution to the above controllability problem.

Next, we consider the integral transform defined by
\begin{equation}\label{IT}
    w(t,x)=\int_{0}^T\mathcal K(t,\tau)u(\tau,x)d\tau.
\end{equation}
We claim that the function $w$ satisfies
\begin{equation}\label{simas}
\begin{cases}
L(t,x,D) w :=-\Delta_{t,x} w(t,x) -\dfrac{\lambda}{|x|^2} w(t,x)=0 \qquad \mbox{in}\;  (-1,1)\times \Omega,\\
w=0 \qquad \mbox{in}\; (-1,1)\times \omega.
\end{cases}
\end{equation}
Indeed, if we set $A(x,D') w :=-\Delta w - \dfrac{\lambda}{|x|^2}$, we have for all $p\in H^2_0((-1,1)\times \Omega)$ that
\begin{align*}
&\left\langle w,L^*(t,x,D)p \right\rangle_{L^2((-1,1)\times \Omega)}=\left\langle w,-\partial_{t}^2 p +A^*(x,D')p\right\rangle_{L^2((-1,1)\times \Omega)}\\
&=\left\langle \int_{0}^T \mathcal K(t,\tau)u(\tau,x) d\tau, -\partial_{t}^2p+A^*(x,D')p\right\rangle_{L^2((-1,1)\times \Omega)}\\
&=\left\langle u,-\int_{-1}^1 \overline{\mathcal K}(t,\tau)\partial_{t}^2 p\, dt +A^*(x,D')\int_{-1}^1\overline{\mathcal K}(t,\tau)p\, dt \right\rangle_{L^2(Q)}\\
&=\left\langle u,\mathrm{i}\partial_{\tau}\int_{-1}^1\overline{\mathcal K}(t,\tau)p\, dt + A^*(x,D')\int_{-1}^1\overline{\mathcal K}(t,\tau)p\, dt\right\rangle_{L^2(Q)}\\
&=\left\langle u,P^*(\tau,x,D)\tilde p\right\rangle_{L^2(Q)}=0,
\end{align*}
with
$\displaystyle \tilde p(\tau,x)=\int_{-1}^{1}\overline{\mathcal K}(t,\tau)p(t,x)dt$, where we have used the equations $\eqref{Simas}_1$ and \eqref{eqcon}.  Moreover, $\eqref{Simas}_2$ implies the second equality in \eqref{simas}. This proves the claim \eqref{simas}. By the unique continuation result of Proposition \ref{propuc}, we obtain that $w=0$ in $(-1,1)\times \Omega$. In particular
$$
w(-1,x)=\int_{0}^T\mathcal K(-1,\tau)u(\tau,x)d\tau=\int_{0}^T \psi(\tau)u(\tau,x)d\tau=0\quad \mbox{in}\; \Omega.
$$
By density of $C^\infty_0(0,T)$ in $L^2(0,T)$, we obtain $u = 0$ in $Q$. 
\end{proof}

Similarly, by Proposition \ref{propuc1}, we can prove the following result in the non-singular case $\lambda=0$.
\begin{theorem}\label{thm11}
Let $\omega \subset \Omega$ be a set of positive measure. We assume that the function $u \in C([0,T];L^2(\Omega))$ satisfies \eqref{(1.2)} with $\lambda=0$. Then $u= 0$ in $(0,T) \times \omega$ yields $u=0$ in $(0,T) \times \Omega$.
\end{theorem}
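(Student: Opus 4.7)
The plan is to mirror the proof of Theorem \ref{thm1}, substituting Proposition \ref{propuc1} for Proposition \ref{propuc}; the only substantive change is that the hypothesis $u = 0$ on $(0,T) \times \omega$ is now a measure-theoretic condition on a set that need not be open, so one must pass from ``vanishing a.e.'' to ``vanishing on a positive-measure set suitable for Proposition \ref{propuc1}''.

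First I would fix an arbitrary $\psi \in C^\infty_0(0,T)$ and take the same smooth kernel $\mathcal{K}(t,\tau) \in C^\infty([-1,1] \times [0,T])$ that solves the one-dimensional boundary control problem \eqref{eqcon} with $\mathcal{K}(-1,\cdot) = \psi$ (existence from \cite{MRR}). Set
\[
w(t,x) := \int_0^T \mathcal{K}(t,\tau)\, u(\tau,x)\, d\tau.
\]
Repeating, with $\lambda = 0$, the weak duality computation in the proof of Theorem \ref{thm1} (test against $p \in H^2_0((-1,1)\times \Omega)$, swap the order of integration, pull $\partial_t^2$ through the kernel and use $\mathrm{i}\partial_\tau \mathcal{K} = \partial_t^2 \mathcal{K}$ together with $\mathcal{K}(\cdot,0) = \mathcal{K}(\cdot,T) = 0$) shows that $w$ is a distributional solution of $-\Delta_{t,x} w = 0$ in $(-1,1) \times \Omega$, and interior elliptic regularity places it in $H^1_{\mathrm{loc}}$, hence in the functional setting of equation \eqref{eqels} with $\lambda = 0$.

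To invoke Proposition \ref{propuc1} I then verify that $w$ vanishes on a set of positive measure. Since $u = 0$ almost everywhere on $(0,T) \times \omega$, Fubini's theorem produces a measurable subset $\omega' \subset \omega$ with $|\omega'| = |\omega| > 0$ such that, for every $x \in \omega'$, one has $u(\tau, x) = 0$ for a.e.\ $\tau \in (0,T)$. The very definition of $w$ then forces $w(t,x) = 0$ for all $(t,x) \in (-1,1) \times \omega'$, a set of positive measure in $(-1,1) \times \Omega$. Proposition \ref{propuc1} yields $w \equiv 0$ on $(-1,1) \times \Omega$. Evaluating at $t = -1$ gives
\[
0 = w(-1,x) = \int_0^T \psi(\tau)\, u(\tau, x)\, d\tau \qquad \text{for every } x \in \Omega,
\]
and the density of $C^\infty_0(0,T)$ in $L^2(0,T)$ concludes that $u = 0$ in $Q$.

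The point where the argument genuinely departs from the proof of Theorem \ref{thm1} — and where I expect the main (mild) obstacle to lie — is the Fubini step: in Theorem \ref{thm1} the openness of $\omega$ makes the vanishing of $w$ on $(-1,1) \times \omega$ immediate pointwise, whereas here we must first choose an a.e.\ representative in the $x$-variable to exhibit the positive-measure set $(-1,1)\times \omega'$ on which $w$ vanishes in the sense required by Proposition \ref{propuc1}. Because Proposition \ref{propuc1} is tailored precisely to this positive-measure hypothesis (via the asymptotic expansion of Proposition \ref{lmasy} and a Lebesgue density argument), no further modifications are needed.
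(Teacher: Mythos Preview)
Your proposal is correct and follows exactly the route the paper indicates: the paper itself gives no detailed proof of Theorem~\ref{thm11}, saying only ``Similarly, by Proposition~\ref{propuc1}, we can prove the following result,'' i.e., rerun the proof of Theorem~\ref{thm1} with Proposition~\ref{propuc1} in place of Proposition~\ref{propuc}. Your added Fubini step (extracting $\omega'\subset\omega$ with $|\omega'|=|\omega|$ so that $w$ vanishes on the product set $(-1,1)\times\omega'$) is precisely the detail the paper leaves implicit when passing from an open $\omega$ to a merely measurable one, and it is handled correctly.
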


\subsection{Approximate controllability}
Next, we apply the uniqueness result to prove the approximate controllability of the singular Schrödinger equation with an inverse square potential. This will be done without assuming geometrical restrictions on the control region, generalizing the classical case $\lambda=0$.

Let $T > 0$ and let $\omega \Subset \Omega$ be a nonempty open set. Let us consider the following controlled problem
\begin{equation} \label{(1.3)}
\begin{cases}\mathrm{i}\partial_t u -\Delta u -\dfrac{\lambda}{|x|^2} u=\mathds{1}_\omega h(t,x), \qquad & (t, x) \in Q, \\
u(t,x)=0,  \qquad &(t, x) \in \Sigma,\\
u(0, x)=u_0, \qquad & x \in \Omega,\end{cases}
\end{equation}
where $\mathds{1}_\omega$ is the indicator function of the set $\omega$.

\begin{definition}
The equation \eqref{(1.3)} is approximately controllable in time $T$ if, for every initial datum $u_0\in L^2(\Omega)$, for every desired datum $u_d\in L^2(\Omega)$, and for every $\varepsilon>0$, there exists a control $h\in L^2((0,T)\times \omega)$ such that the solution of \eqref{(1.3)} satisfies
$$\|u(T,\cdot)-u_d\|_{L^2(\Omega)} \le \varepsilon.$$
\end{definition}

Let us introduce the so-called adjoint problem associated with \eqref{(1.3)}:
\begin{equation} \label{(1.4)}
\begin{cases}\mathrm{i}\partial_t v -\Delta v -\dfrac{\lambda}{|x|^2} v=0, \qquad & (t, x) \in Q, \\
v(t,x)=0,  \qquad &(t, x) \in \Sigma,\\
v(T, x)=v_T, \qquad & x \in \Omega.\end{cases}
\end{equation}
It is well known, see for instance \cite[Theorem 15.5]{Za'20}, that the approximate controllability of the equation \eqref{(1.3)} is equivalent to the following unique continuation property for the adjoint problem \eqref{(1.4)}:
$$v=0 \quad \text{in } (0,T)\times \omega \qquad \Longrightarrow \qquad v=0 \quad \text{in } Q.$$
Consequently, Theorem \ref{thm1} applied to equation \eqref{(1.4)} yields the following approximate controllability result.
\begin{theorem}\label{thmap}
The singular Schrödinger equation \eqref{(1.3)} is approximately controllable in every time $T > 0$ with controls supported in any nonempty open subset $\omega \Subset \Omega$.
\end{theorem}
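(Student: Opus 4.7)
The plan is to reduce the claim to the unique continuation Theorem \ref{thm1} via the standard duality between approximate controllability and observability/unique continuation. Since the well-posedness of \eqref{(1.3)} in $C([0,T];L^2(\Omega))$ follows from Duhamel's formula applied to the $C_0$-semigroup generated by $\mathcal{L}_\lambda$ (already discussed in Section \ref{sec3}), there is nothing delicate on the direct side; the entire content of the theorem sits on the adjoint side, where Theorem \ref{thm1} is tailor-made.

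First I would make the duality precise. Define the affine solution map $\mathcal{S}_T\colon L^2((0,T)\times\omega)\to L^2(\Omega)$ by $\mathcal{S}_T(h):=u(T,\cdot)$, where $u$ solves \eqref{(1.3)}. Approximate controllability at time $T$ amounts to the density in $L^2(\Omega)$ of the range of the linear part $h\mapsto \mathcal{S}_T(h)-\mathcal{S}_T(0)$. By the Hahn-Banach theorem, this density is equivalent to the injectivity of the $L^2$-adjoint of that linear map. A routine integration by parts (with boundary terms vanishing by the homogeneous Dirichlet condition and the integrability at the origin guaranteed by the Hardy inequality \eqref{hp}) identifies that adjoint with the trace map $v_T\mapsto v\vert_{(0,T)\times\omega}$, where $v$ solves the adjoint backward problem \eqref{(1.4)}. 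This reformulation is exactly \cite[Theorem 15.5]{Za'20}, so I would simply cite it rather than redo the computation.

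Second, I would observe that \eqref{(1.4)} is structurally the same singular Schrödinger equation as \eqref{(1.2)}: the operator $\mathcal{L}_\lambda$ is self-adjoint, the equation is time-reversible under $t\mapsto T-t$, and the subcritical condition $\lambda<\lambda_\star$ ensures that $v\in C([0,T];L^2(\Omega))$. Therefore Theorem \ref{thm1} applies verbatim to $v$: the hypothesis $v=0$ in $(0,T)\times\omega$ forces $v\equiv 0$ in $Q$, which means $v_T=0$.

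Combining the two steps, the adjoint operator is injective, hence the range of the control-to-state map is dense, which is the required approximate controllability for every $T>0$ and every nonempty open $\omega\Subset\Omega$. The main obstacle — the sharp unique continuation from an arbitrarily small cylindrical subset with a critically singular potential — has already been overcome in Theorem \ref{thm1}; here there is no geometric restriction to defeat because the duality converts the question into precisely the form that Theorem \ref{thm1} addresses.
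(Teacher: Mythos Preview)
Your proposal is correct and follows exactly the same approach as the paper: invoke the standard duality between approximate controllability and unique continuation for the adjoint problem (citing \cite[Theorem~15.5]{Za'20}), then apply Theorem~\ref{thm1} to the backward equation \eqref{(1.4)} to conclude. The paper does not elaborate further than you do; in fact your write-up is more detailed than the paper's, which simply states the equivalence and the application of Theorem~\ref{thm1} without spelling out the Hahn--Banach/range-density reformulation.
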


\begin{remark}
In the non-singular case $\lambda=0$, the counterpart of Theorem \ref{thmap} follows directly from the classical Holmgren’s uniqueness theorem; see for instance \cite{Jo'82}. However, in the singular case, one needs the uniqueness result of Theorem \ref{thm1} for inverse square potentials.
\end{remark}

\subsection{Uniqueness for an inverse source problem}
In this subsection, we apply Theorem \ref{thm1} to prove the uniqueness of an inverse source problem in the singular Schrödinger equation. The ideas are slight modifications of their counterparts in \cite{IY'22}, so we include the proofs for the reader’s convenience.

Let $u\in C([0,T];L^2(\Omega))$ be a solution to the following singular Schrödinger equation
\begin{equation} \label{(1.1)}
\begin{cases}\mathrm{i}\partial_t u -\Delta u -\dfrac{\lambda}{|x|^2} u=f(x) \rho(t), & (t, x) \in Q, \\
u(0, x)=0, & x \in \Omega,\end{cases}
\end{equation}
where $f\in L^2(\Omega)$ and $\rho\in C^1[0,T]$ is a real function.

\noindent\textbf{Inverse source problem.}  We aim to determine the unknown spatial component $f$ of the source term for a given temporal component $\rho$ from the measurement
$$u\rvert_{(0,T) \times \omega},$$
where $\omega \subset \Omega$ is an arbitrarily chosen nonempty open set.

\begin{theorem}\label{3}
Let $(u,f)$ be a pair satisfying \eqref{(1.1)}. We further assume that $u,\partial_t u\in H^{1,2}(Q)$ and that the function $\rho \in C^1[0,T]$ satisfies
\begin{equation}\label{(1.8)}
\rho(0) \ne 0.                
\end{equation}
If $u = 0$ in $(0,T) \times \omega$, then $f = 0$ in $\Omega$.
\end{theorem}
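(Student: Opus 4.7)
The strategy is to produce from the unknown source $f$ a companion function $U$ that solves the \emph{homogeneous} singular Schrödinger equation and vanishes on $(0,T)\times\omega$, so that Theorem~\ref{thm1} forces $U\equiv 0$ in $Q$ and hence $f=0$. Concretely, I would set
$$U(t,x):=\bigl(e^{\mathrm{i}t\mathcal{L}_\lambda}f\bigr)(x),$$
the unique $C([0,T];L^2(\Omega))$ solution of $\mathrm{i}\partial_t U-\Delta U-\tfrac{\lambda}{|x|^2}U=0$ with $U(0,\cdot)=f$; this is well defined because $\mathcal{L}_\lambda$ is self-adjoint on $L^2(\Omega)$ (as established in the paragraph preceding Remark~3.3), so $\mathrm{i}\mathcal{L}_\lambda$ generates a unitary $C_0$-group by Stone's theorem.

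Next, I would link $U$ and $u$ through Duhamel's formula: since $u(0,\cdot)=0$,
$$u(t,x)=-\mathrm{i}\int_0^t U(\tau,x)\,\rho(t-\tau)\,d\tau,$$
and differentiating in $t$, which is legitimate by the regularity hypothesis $u,\partial_t u\in H^{1,2}(Q)\hookrightarrow C([0,T];L^2(\Omega))$, yields
$$\partial_t u(t,x)=-\mathrm{i}\rho(0)\,U(t,x)-\mathrm{i}\int_0^t U(\tau,x)\,\rho'(t-\tau)\,d\tau.$$
This is consistent with $\partial_t u(0,\cdot)=-\mathrm{i}\rho(0)f$, obtained by evaluating \eqref{(1.1)} at $t=0$. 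Since $u\equiv 0$ on $(0,T)\times\omega$ forces $\partial_t u\equiv 0$ there, restricting the identity above to $\omega$ gives, for every $t\in(0,T)$, the $L^2(\omega)$-valued Volterra equation
$$\rho(0)\,U(t,\cdot)\big|_\omega=-\int_0^t U(\tau,\cdot)\big|_\omega\,\rho'(t-\tau)\,d\tau.$$

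Setting $g(t):=\|U(t,\cdot)\|_{L^2(\omega)}$, which is continuous in $t$, and using $\rho(0)\neq 0$, this collapses to the Gronwall-type inequality $|\rho(0)|\,g(t)\le \|\rho'\|_{L^\infty(0,T)}\int_0^t g(\tau)\,d\tau$, so that $g\equiv 0$. Consequently $U=0$ a.e.\ on $(0,T)\times\omega$; Theorem~\ref{thm1} applied to $U$ then yields $U\equiv 0$ in $Q$, and evaluation at $t=0$ gives $f=U(0,\cdot)=0$ in $\Omega$. The only non-routine step I anticipate is turning the vanishing of $u$ on $(0,T)\times\omega$ into a \emph{non-degenerate} Volterra identity for $U$: this is precisely where the hypothesis $\rho(0)\neq 0$ enters, providing the diagonal term $\rho(0)U(t,x)$ without which Gronwall cannot close the loop, so condition~\eqref{(1.8)} is not technical but the driving assumption of the argument.
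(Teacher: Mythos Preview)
Your argument is correct and reaches the same endpoint as the paper—produce a solution of the homogeneous equation that vanishes on $(0,T)\times\omega$, then invoke Theorem~\ref{thm1}—but the route is genuinely different. The paper \emph{defines} its companion function $z$ by Volterra inversion, $z:=K^{-1}\partial_t u$ with $(Kv)(t)=\rho(0)v(t)+\int_0^t\rho'(t-s)v(s)\,ds$; the representation $u=\int_0^t\rho(t-s)z(s,\cdot)\,ds$ and the vanishing $z|_{(0,T)\times\omega}=0$ are then algebraic, and the nontrivial step is showing that $z$ solves the homogeneous equation, which the paper extracts via Titchmarsh's convolution theorem. You run the construction in reverse: you \emph{define} $U$ as the unitary orbit of $f$, so the homogeneous equation is free, and the work shifts to showing $U|_{(0,T)\times\omega}=0$, which your Gronwall argument handles. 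In effect, your proof is the paper's proof of Theorem~\ref{4} (where the companion $v$ is defined by the semigroup and then tied to $u$) transplanted into the setting of Theorem~\ref{3}. What you gain is that Titchmarsh disappears entirely; what you pay is a hidden hypothesis: your Duhamel identity $u=-\mathrm{i}\int_0^t U(\tau,\cdot)\rho(t-\tau)\,d\tau$ identifies $u$ with the mild solution, which presumes $u|_\Sigma=0$. The paper's Volterra route does not need this, since $z$ is built from $u$ rather than from $f$. This is not fatal—the paper's own proof speaks of ``the mild solution of \eqref{(1.1)}'', so Dirichlet data are implicitly in force—but you should state the boundary condition when you invoke Duhamel.
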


\begin{proof}
Let $u$ be a function satisfying \eqref{(1.1)} and $u = 0$ in $(0,T) \times \omega$. We consider the equation of $z(t,x)$ given by
\begin{equation}\label{(4.2)}
 (Kz)(t,x)=\partial_t u(t,x), \qquad (t, x) \in Q,
\end{equation}
where the operator $K$ is defined by
\begin{equation}\label{(4.1)}
(Kv)(t):= \rho(0) v(t) + \int^t_0 \rho'(t-s)v(s) ds, \qquad t\in (0,T).
\end{equation}

Since $\rho(0) \ne 0$, the operator $K$ is a Volterra operator of the second 
kind, and we see that $K^{-1}: H^1(0,T) \longrightarrow H^1(0,T)$ exists and 
is bounded. Therefore, $z(\cdot,x) \in H^1(0,T)$ is well defined for each 
$x \in \Omega$, and
\begin{equation}\label{(4.3)}
\partial_t u(t,x) = \rho(0)z(t,x) + \int^t_0 \rho'(t-s)z(s,x) ds, \qquad (t, x) \in Q,                                   
\end{equation}
since $\partial_tu \in H^{1,2}(Q)$.

Since $u(0,\cdot) = 0$ in $\Omega$ and 
$$
\rho(0)z(t,x) + \int^t_0 \rho'(t-s)z(s,x) ds
= \partial_t\left( \int^t_0 \rho(t-s)z(s,x) ds\right), 
$$
we obtain
$$u(t,x) = \int^t_0 \rho(t-s)z(s,x) ds, \qquad (t, x) \in Q,
$$
that is,
\begin{equation}\label{(4.4)}
u(t,x) = \int^t_0 \rho(s)z(t-s,x) ds, \qquad (t, x) \in Q.
\end{equation}

We claim that $z \in H^{1,2}(Q)$ satisfies 
\begin{equation}\label{(4.5)}
\mathrm{i}\partial_t z(t,x) + A(x,D')z(t,x)=0 \qquad \mbox{in } (t, x) \in Q,
\end{equation}
and 
\begin{equation}\label{(4.6)}
z = 0 \quad \mbox{in $(0,T) \times \omega$}.
\end{equation}
We can easily see that \eqref{(4.6)} holds true, since $\partial_tu = 0$ in $(0,T) \times \omega$ implies $Kz = 0$ in $(0,T) \times \omega$. The the injectivity of $K$ implies \eqref{(4.6)}.

Using the embedding $H^{1,2}(Q) \hookrightarrow C([0,T];L^2(\Omega))$ and the equality \eqref{(4.3)}, we deduce
$$
\partial_tu(0,x) = \rho(0)z(0,x), \qquad x\in \Omega.
$$
On the other hand, since $u(0,\cdot)=0$ and $f\rho \in C^1([0,T];L^2(\Omega))$, the mild solution of \eqref{(1.1)} becomes a classical one. Then substituting $t=0$ in \eqref{(1.1)} we obtain
$$
\partial_tu(0,x) = -\mathrm{i} \rho(0)f(x), \qquad x\in \Omega.
$$
Hence $\rho(0)z(0,x) = -\mathrm{i} \rho(0)f(x)$ for $x\in \Omega$.  By $\rho(0) \ne 0$, we 
reach 
\begin{equation}\label{(4.7)}
z(0,x) = - \mathrm{i} f(x), \qquad x\in \Omega.
\end{equation}
Now we will prove (\ref{(4.5)}). By virtue of \eqref{(4.4)} and \eqref{(4.7)}, we have
\begin{align*}
& \partial_tu(t,x) = \rho(t)z(0,x) + \int^t_0 \rho(s)\partial_t z(t-s,x) ds\\
=&-\mathrm{i} \rho(t)f(x) + \int^t_0 \rho(s)\partial_t z(t-s,x) ds
\end{align*}
and
$$
A(x,D')u(t,x) = \int^t_0 \rho(s) A(x,D')z(t-s,x) ds, \qquad (t, x) \in Q.
$$
Consequently, \eqref{(1.1)} implies
\begin{align*}
& - \mathrm{i}\rho(t)f(x) = \partial_tu - \mathrm{i} A(x,D')u(t,x)\\
=& -\mathrm{i} \rho(t)f(x) + \int^t_0 \rho(s)(\partial_t z - \mathrm{i} A(x,D')z)(t-s,x) ds,
\end{align*}
that is,
$$
\int^t_0 \rho(s)(\partial_t z -\mathrm{i} A(x,D')z)(t-s,x) ds = 0, \qquad (t, x) \in Q.
$$
Fix $g\in L^2(\Omega)$ and set $Z(s):= \left\langle (\partial_s z - iA(x,D')z)(s,\cdot), g\right\rangle_{L^2(\Omega)}$ for $s\in (0,T)$. Then we obtain
$$
\int^t_0 \rho(s) Z(t-s) ds = 0, \qquad t\in (0,T).
$$
By Titchmarsh's convolution theorem (see \cite[Theorem VII]{Tit}), 
there exists $t_* \in [0,T]$ such that 
$$
\rho(s) = 0 \quad \mbox{for a.e $s\in (0,T-t_*)$} \quad \text{and} \quad
Z(s) = 0 \quad \mbox{for a.e $s\in (0,t_*)$}.
$$
Since $\rho \in C[0,T]$, we obtain $\rho(s) = 0 \mbox{ for all $s\in (0,T-t_*)$}$. By $\rho(0)\ne  0$, we see that $t^*=T$. Since $g$ is arbitrary, the claim \eqref{(4.5)} is proved.

By using \eqref{(4.5)}-\eqref{(4.6)} and Theorem \ref{thm1}, we obtain $z(0,\cdot)=0$. Then the equality \eqref{(4.7)} implies that $f=0$ in $\Omega$.
\end{proof}


\begin{theorem}\label{4}
Let $u\in C([0,T];L^2(\Omega))$ and $f\in L^2(\Omega)$ satisfy \eqref{(1.1)}. Let $\rho \in C^1[0,T]$ be a function such that
\begin{equation}\label{soplo}
\mbox{$\rho$ is not identically zero on $[0,T]$.}   
\end{equation}
If $u = 0$ in $\Sigma$ and $u = 0$ in $(0,T) \times \omega$, then $f = 0$ in $\Omega$ and $u=0$ in $Q$.
\end{theorem}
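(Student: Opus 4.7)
\smallskip
\noindent\textbf{Proof plan for Theorem \ref{4}.}
The plan is to bypass the invertibility assumption $\rho(0)\neq 0$ used in Theorem \ref{3} by exploiting the additional boundary hypothesis $u=0$ on $\Sigma$, which together with $u(0,\cdot)=0$ allows one to represent $u$ as a Duhamel convolution in time, and then to apply Titchmarsh's convolution theorem in the spirit of the closing argument of Theorem \ref{3}.

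First I would introduce the reference function $z(t,x):=-\mathrm{i}\bigl(e^{-\mathrm{i} t \mathcal{L}_\lambda} f\bigr)(x)$, where $\{e^{-\mathrm{i} t \mathcal{L}_\lambda}\}_{t\geq 0}$ is the $C_0$-semigroup generated by the self-adjoint operator $\mathcal{L}_\lambda$ on $L^2(\Omega)$. Then $z \in C([0,T]; L^2(\Omega))$ solves the homogeneous singular Schrödinger equation \eqref{(1.2)} on $Q$ with vanishing Dirichlet trace and $z(0,\cdot) = -\mathrm{i} f$. Since $u=0$ on $\Sigma$, $u(0,\cdot)=0$, and $f\rho \in C([0,T]; L^2(\Omega))$, a Duhamel argument yields
$$
u(t,x) = \int_0^t \rho(s)\, z(t-s, x)\, ds, \qquad (t,x)\in Q.
$$

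Second, I would exploit the observation $u=0$ on $(0,T) \times \omega$. For an arbitrary $g \in L^2(\omega)$, set $Z_g(s) := \langle z(s,\cdot), g \rangle_{L^2(\omega)}$. The previous representation, tested against $g$ on $\omega$, reads
$$
\int_0^t \rho(s)\, Z_g(t-s)\, ds = 0, \qquad t \in (0,T).
$$
Define $t_* := \sup\{t \in [0,T] : \rho \equiv 0 \text{ on } [0,t]\}$; the hypothesis \eqref{soplo} combined with continuity of $\rho$ forces $t_* < T$. Applying Titchmarsh's convolution theorem (see \cite[Theorem VII]{Tit}) to the identity above for each fixed $g$, together with the fact that any initial null interval of $\rho$ has length at most $t_*$, yields $Z_g \equiv 0$ on $(0, T-t_*)$. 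Since $g$ is arbitrary in $L^2(\omega)$, this amounts to $z = 0$ in $(0, T-t_*) \times \omega$.

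Third, I would apply Theorem \ref{thm1} to $z$ on the time interval $(0, T-t_*)$ in place of $(0,T)$, which is legitimate because $T-t_*>0$ and $z$ satisfies \eqref{(1.2)} on this strip; this gives $z \equiv 0$ on $(0, T-t_*) \times \Omega$, and passing to the limit $t\to 0^+$ in $L^2(\Omega)$ yields $z(0,\cdot) = -\mathrm{i} f = 0$, hence $f = 0$. With $f = 0$, equation \eqref{(1.1)} becomes homogeneous with zero initial and boundary data, and the standard uniqueness for the Schrödinger equation (or, equivalently, the Duhamel formula just derived) gives $u \equiv 0$ on $Q$. The main obstacle is the uniform-in-$g$ control of the vanishing interval of $Z_g$: Titchmarsh's theorem a priori produces $g$-dependent lengths $\alpha_g, \beta_g \geq 0$ with $\alpha_g + \beta_g \geq T$ and $\rho \equiv 0$ a.e.\ on $(0,\alpha_g)$, so one must invoke the continuity of $\rho$ and the definition of $t_*$ to show $\alpha_g \leq t_*$ and therefore $\beta_g \geq T-t_*$ uniformly in $g$; once this uniformity is secured, the argument reduces cleanly to the sharp uniqueness statement of Theorem \ref{thm1}.
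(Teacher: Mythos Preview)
Your proposal is correct and takes essentially the same route as the paper: both represent $u$ as the time-convolution of $\rho$ with the free evolution $z$ (the paper calls it $v$, defined as the solution of the homogeneous problem with initial datum $-\mathrm{i}f$), apply Titchmarsh's theorem to deduce that this free solution vanishes on a subcylinder of $(0,T)\times\omega$, and then invoke Theorem~\ref{thm1} to conclude $f=0$. The only differences are presentational---the paper first reduces to $0\in\mathrm{supp}\,\rho$ and $\rho(0)=0$ (via time-translation and the substitution $w=\int_0^t u\,ds$) and verifies the Duhamel identity through an explicit weak-formulation computation, whereas you invoke the semigroup Duhamel formula directly and absorb the initial null interval of $\rho$ into your parameter $t_*$; your explicit remark on the uniformity in $g$ of the Titchmarsh splitting is exactly the point the paper handles by its preliminary reduction to $0\in\mathrm{supp}\,\rho$.
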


\begin{proof}
By scaling time we may restrict to the case
\begin{equation}\label{soplo10}
0\in \mathrm{supp}\, \rho.
\end{equation}
Moreover, we can reduce the problem to the case $\rho(0)=0$ by making the change of variable $\displaystyle w(t,x)=\int_0^{t}u(s,x)ds$.
 
Now, we have 
$$ P(t,x,D)u=f\rho \quad \mbox{in } Q, \qquad u\vert_{\Sigma}=0,\qquad
u=0\quad \mbox{in } (0,T)\times \omega.$$
Next we introduce the function
\begin{equation}\label{popkorn}
y(t,x)=\int_0^{t}\rho(t-s)v(s,x)ds, \qquad x\in \Omega,
\end{equation}
where $v\in C([0,T];L^2(\Omega))$ is a solution to the initial boundary value problem
$$
P(t,x,D)v=0\quad \mbox{in } Q, \qquad v\vert_{\Sigma}=0, \qquad v(0,\cdot)=-\mathrm{i} f.
$$
By definition, the function $y$ satisfies
\begin{equation}\label{lop}
\quad y\vert_{\Sigma}=0\qquad \text{ and }\qquad  y(0,\cdot)=0.
\end{equation}
Moreover, a simple computation shows that
\begin{equation}\label{lopp}
P(t,x,D)y=f\rho\quad \mbox{in } Q.
\end{equation}
Indeed, since $\rho(0)=0$, for any $p\in H^{1,2}(Q), \; p(T,\cdot)=0, \; p\vert_{\Sigma}=0$, we have
\begin{align*}
    \left \langle y,P^*(t,x,D)p \right\rangle_{L^2(Q)}&=\left \langle y, \mathrm{i}\partial_t p +A^*(x,D')p\right\rangle_{L^2(Q)}\\
    & \hspace{-3cm}=\left \langle \int_0^{t}\rho(t-s)v(s,x)ds, \mathrm{i}\partial_t p +A^*(x,D')p\right\rangle_{L^2(Q)}\\
    &\hspace{-3cm}=\left \langle \int_0^{t}\rho(t-s)v(s,x)ds, A^*(x,D')p\right\rangle_{L^2(Q)}-\left \langle \int_0^{t}\partial_t \rho(t-s)v(s,x)ds, \mathrm{i} p \right\rangle_{L^2(Q)}\\
    &\hspace{-3cm}=\left \langle \int_0^{t}\rho(t-s)v(s,x)ds, A^*(x,D')p\right\rangle_{L^2(Q)}+\left \langle \int_0^{t}\partial_s \rho(t-s)v(s,x)ds, \mathrm{i} p \right\rangle_{L^2(Q)}\\
    &\hspace{-3.2cm}=\left \langle \int_0^{t}\rho(t-s)A(x,D')v(s,x)ds, p\right\rangle_{L^2(Q)}+\left \langle \int_0^{t} \rho(t-s) \mathrm{i} \partial_s v(s,x)ds, p \right\rangle_{L^2(Q)}\\
    &\hspace{-2.8cm} + \left \langle f\rho,p\right\rangle_{L^2(Q)}\\
    &\hspace{-3cm}=\left \langle \int_0^{t}\rho(t-s) P(s,x,D)v(s,x), p\right\rangle_{L^2(Q)} + \left \langle f\rho,p\right\rangle_{L^2(Q)}\\
    &\hspace{-3cm}= \left \langle f\rho,p\right\rangle_{L^2(Q)}.
\end{align*}
Hence, the function $y$ is a solution to \eqref{lopp}-\eqref{lop}. By uniqueness of the solution to this initial-boundary value problem, we obtain $y=u$ and then
$$ y\vert_{(0,T)\times \omega}=0.$$
This equality implies 
$$
\int_0^{t}\rho(t-s) v(s,x)ds=0\qquad\mbox{in } (0,T)\times \omega.
$$
Thus
$$
\int_0^{t}\rho(t-s) \langle v(s,\cdot ), g\rangle_{L^2(\omega)}ds=0,
$$
for all $g\in L^2(\omega)$. By Titchmarsh's theorem, there exists $t_* \in [0,T]$ such that $\rho=0$ for a.e $s\in(0,T-t_*)$ and $\langle v(s,\cdot ), g\rangle_{L^2(\omega)}$ for a.e $s\in (0,t_*)$. As in the proof of Theorem \ref{3}, we obtain $t_*=T$ and 
$$
v=0\qquad \mbox{in } (0,T)\times \omega.$$
Theorem \ref{thm1} implies that 
$$
v(t,x)=0 \qquad \mbox{in } Q.
$$
Therefore, by \eqref{popkorn}, we infer that
$$y(t,x)=0 \qquad \mbox{in } Q.$$
Then \eqref{lopp} yields that
$$
\rho(t) f(x)=0\qquad \mbox{in } Q.
$$
Since the function $\rho\not\equiv 0$ in $(0,T)$, we obtain $f= 0$ in $\Omega$.
\end{proof}

\begin{remark}
In the non-singular case $\lambda=0$, Theorem \ref{3} and Theorem \ref{4} hold true assuming only that $\omega \subset \Omega$ is a set of positive measure. It is sufficient to apply Theorem \ref{thm11} instead of Theorem \ref{thm1} in the previous proofs.
\end{remark}

\section{Conclusion}
In this paper, we have investigated the sharp uniqueness of a singular Schrödinger equation with an inverse square potential in the subcritical case $\lambda <\lambda_\star$. Inspired by \cite{IY'22} for bounded coefficients, we have proven similar uniqueness results for singular potentials which are considered critical. The uniqueness result has been applied to approximate controllability and an inverse source problem. This has been done without assuming geometrical conditions on the control/measurement set. It should be noted that imposing some geometrical conditions is quite common in the context of controllability and inverse problems. 

Although we have considered an inverse square potential for simplicity, our results can be adapted for more general cylindrical and multi-body potentials: let $3 \leq k \leq n$ and define the sets $\mathcal{A}_k:=\{J \subseteq\{1,2, \ldots, n\} \colon \mathrm{card}(J)=k\}$ and $
\mathcal{B}_k:=\left\{\left(J_1, J_2\right) \in \mathcal{A}_k \times \mathcal{A}_k \colon J_1 \cap J_2=\varnothing \text { and } J_1<J_2 \,(\text{alphabetic order})\right\}$. For $x=\left(x_1, x_2, \ldots, x_n\right) \in \mathbb{R}^n$ and $J \in \mathcal{A}_k$, we denote by $x_J$ the $k$-tuple $\left(x_i\right)_{i \in J}$ and $\displaystyle \left|x_J\right|^2=\sum_{i \in J} x_i^2$. 

Making use of the above notations, we can show similar results for the general equation
\begin{equation} \label{eqc1}
\begin{cases} \displaystyle
\mathrm{i}\partial_t u -\Delta u -\sum_{J \in \mathcal{A}_k} \frac{\lambda_J}{\left|x_J\right|^2} u -\sum_{\left(J_1, J_2\right) \in \mathcal{B}_k} \frac{\lambda_{J_1 J_2}}{\left|x_{J_1}-x_{J_2}\right|^2} u=f(x) \rho(t), & (t, x) \in Q, \\
u(0, x)=0, & x \in \Omega,
\end{cases}
\end{equation}
where $\lambda_J$ and $\lambda_{J_1 J_2}$ are real constants such that
$$\sum_{J \in \mathcal{A}_k} \max \{\lambda_J, 0\} + \sum_{\left(J_1, J_2\right) \in \mathcal{B}_k} \max\{\lambda_{J_1 J_2}, 0\} <\lambda_\star(k).$$
We refer to \cite{FFT'12} for more details.

In the present paper, we have studied the case of an interior singularity, that is $0\in \Omega$. It would be of much interest to investigate the case when the singularity arises at the boundary, i.e., $0\in \Gamma$.

\section*{Acknowledgment}
I would like to thank the anonymous referees for careful reading and invaluable comments which led to this improved version.

\end{document}